\renewcommand{\today}{%\number\day\space
  \ifcase\month\or
  January\or February\or March\or April\or May\or June\or
  July\or August\or September\or October\or November\or December\fi
  \space \number\year}
\newtheorem{theorem}{Theorem}[section]
\newtheorem{lemma}[theorem]{Lemma}
\theoremstyle{definition}
\newtheorem{definition}[theorem]{Definition}
\newtheorem{remark}[theorem]{Remark}
\newtheorem{example}[theorem]{Example}
\DeclareSymbolFont{AMSb}{U}{msb}{m}{n}
\DeclareMathSymbol{\N}{\mathbin}{AMSb}{"4E}
\DeclareMathSymbol{\Z}{\mathbin}{AMSb}{"5A}
\DeclareMathSymbol{\R}{\mathbin}{AMSb}{"52}
\DeclareMathSymbol{\Q}{\mathbin}{AMSb}{"51}
\DeclareMathSymbol{\I}{\mathbin}{AMSb}{"49}
\DeclareMathSymbol{\C}{\mathbin}{AMSb}{"43}
\def\dotminussym#1#2{%
  \setbox0=\hbox{$\m@th#1-$}%
  \kern.5\wd0%
  \hbox to 0pt{\hss\hbox{$\m@th#1-$}\hss}%
  \raise.8\ht0\hbox to 0pt{\hss$\m@th#1.$\hss}%
  \kern.5\wd0}
\title{A metric-like topology on BL-algebras}
\author{Seyed Mohammad Amin Khatami}
\address{Department of Computer Science, Birjand University of Technology, Birjand, Iran.}
\email{khatami@birjandut.ac.ir} \urladdr{}
\begin{document}
\maketitle
\begin{abstract}
This paper is devoted to introduce a topology on BL-algebras, makes them
semitopological algebras. For any BL-algebra
$\mathcal{L}=(L, \wedge, \vee, *, \too , 0, 1)$, the introduced topology is defined by a distance-like function between elements of $L$ which is defined by $a \leftrightarrow b=(a\too b)*(b\too a)$. We will show that when the continuous scale $[0,1]$ is endowed to be a BL-algebra, then this topology admits some of the most important properties of the metric topology. Finally, we will show that this topology can be examined by a similar topology on dual of BL-algebras as well.
\smallskip
\noindent\emph{Keywords:} BL-algebra, dual of BL-algebra, topological BL-algebra, open ball topology, similarity topology
\end{abstract}
\section{Introduction}\label{sec:1}
Triangular norms and triangular conorms, shortly t-norms and s-norms, have been used in several areas of mathematics. Their origin have been goes back to \cite{menger,sch}. One of the areas that t-norms and s-norms have been appeared, is many-valued logics. Indeed, a t-norm (s-norm)
is a generalization for the interpretation of the conjunction connective (disjunction connective) \cite{hol,als}.

Basic logic introduced by H{\'a}jek in the early of 1998 \cite{hajek98}, is known as the logic of continuous t-norms. The algebraic counterpart
of a propositional basic logic is a BL-algebra.
%The Lindenbaum algebra of equivalent formulas of propositional basic logic is a BL-algebra.
MV-algebras, introduced by Chang \cite{chang1959new} to prove the completeness theorem for \lo logic, are special types of BL-algebras. A more general algebraic structure originated in logics without contractions is residuated lattice. The oldest version of such structure appeared in classical logic is Boolean algebra.

Algebraic structures are studied in algebraic and topological point of view. Algebra studies the property of operations and algorithmic computations of a space, while topology provide a framework for understanding its geometric properties.
Besides introducing the concept of BL-algebras \cite{hajek98}, their algebraic and topological properties are of the most interesting research areas.

Bozooei et.al in \cite{zah-bor-2016, bor-rez-kou-2011} introduced the notion of topological BL-algebras. In \cite{bor-rez-kou-2012} they studied the metrizability of BL-algebras as well.
The aim of this article is to introduce a metric-like topology on BL-algebras which makes them semitopological algebras in the sense of \cite{bor-rez-kou-2011}.

One of the biggest obstacles of extending the results of classical logic
to basic logic is non-continuity of the interpretations of logical connectives. Therefore,
the mentioned topology on BL-algebras could be seen as an applicable tool to extend the results of
classical logic to H{\'a}jek Basic logic.

Here, for any BL-algebra $\mathcal{L}=(L, \wedge, \vee, *, \too , 0, 1)$, we define two topologies
$T_*$ and $\mathbf{T}_*$ on $L$ and $L^2$ that all the operators of $\mathcal{L}$ becomes continuous function with respect to these topologies. The construction of $T_*$ is based on the $*$-balls $B_r(a)=\{b: a\leftrightarrow b\ge r\}$ in which $a\leftrightarrow b=(a\too b)*(b\too a)$ and so it seems like a metric topology. We show that when the continuous scale $[0,1]$ is endowed to be a BL-algebra, $T_*$ admits some of the most important properties of the metric topology.
This fact results in a simpler way for analysing $T_*$. Indeed, we show that when $[0,1]$ is considered to be a BL-algebra, then $T_*$ could be examined by a topology $T_\star$ on $[0,1]$ as a dual of a BL-algebra. The studying of $T_\star$ on $[0,1]$ as a dual of a BL-algebra in the cases that $T_\star$ forms a metric topology, has been the subject of the author conference paper \cite{khatami2018}.

The rest of the paper organized as follows: Section 2 presents a summary of t-norms, s-norms and BL-algebras. Section 3 introduces a topology $T_*$ on any BL-algebra $\mathcal{L}=(L, \wedge, \vee, *, \too , 0, 1)$ which makes it a semitopological algebra. Section 4 shows when $[0,1]$ is considered to be a BL-algebra, $T_*$ admits some the important properties of the metric topology.
Finally, Section 5 defines a dual concept for BL-algebras and examining $T_*$ on $[0,1]$ by a topology on its dual.
\section{Preliminaries}\label{sec:per}
In many-valued logics, t-norms and s-norms sometimes play the role of the interpretation of
the conjunction and disjunction connective. Recall that a triangular norm, in shortly a t-norm, is a binary function $T$ from $[0,1]^2$ into $[0,1]$ which is associative, commutative, non-decreasing on both arguments and $T(1,x)=x$ for all $x\in[0,1]$ \cite[Definition 1.1]{Klement}. The concept of t-conorm or s-norm reversed the boundary condition of the concept of t-norm. Thus an s-norm is an associative, commutative, and non-decreasing function $S$ from the unite square into the unite interval satisfying for all $x\in[0,1]$ the boundary condition $S(0,x)=x$ \cite[Definition 1.13]{Klement}.

Bellow, the most important t-norms and s-norms which are employed in the most significant many-valued logics as conjunction and disjunction are listed in Table \ref{tnor}.
\begin{table}[ht!]
\centering
%\vspace{0.4cm}
%\vspace{-0.9cm}
\begin{tabular}{l l l}
\toprule
\rowcolor{black!20}
t-norm & s-norm
\\
\midrule
$T_L(x,y)=\max\{0,x+y-1\}$&$S_L(x,y)=\min\{1,x+y\}$
\\
\midrule
$T_G(x,y)=\min\{x,y\}$&$S_G(x,y)=\max\{x,y\}$
\\
\midrule
$T_\pi(x,y)=x.y$&$S_\pi(x,y)=x+y-x.y$
\\
\midrule
\end{tabular}
%�\smallskip�
%\begin{fontspace}{0.75}{2}
\caption{\lo, \g, and Product t-norm and s-norm}\label{tnor}
%\end{fontspace}
\end{table}

In 1998, H{\'a}jek introduced a many-valued logic called  {\em Basic logic} based on arbitrary continuous t-norms \cite{hajek98}. Indeed, Basic logic could be seen as an extension of the \lo, \g, and Product logic.

Assume that $T$ is a continuous t-norm and $R_T$ is its residua which is defined by
\begin{equation}\label{res}
z\le R_T(x,y)~~~~~ \text{iff} ~~~~~T(z,x)\le y
\end{equation}
for all $x,y,z\in [0,1]$. If $Prop$ is generated from a set of atomic propositions $P$ by formal operations $\{\&, \to, \bot\}$ and $e_0: P\to[0,1]$ is a function, there is a unique extension $e$ of $e_0$, called an evaluation, satisfying the following rules \cite[Section 2.2]{hajek98}:
\begin{itemize}
  \item $e(\bot)=0$,
  \item $e(\varphi \&\psi)=T\left(e(\varphi),e(\psi)\right)$,
  \item $e(\varphi\to \psi)=R_T(e(\varphi),e(\psi))$.
\end{itemize}
The algebraic counter part of a theory in Basic logic, forms an algebra, called BL-algebra. Indeed,
if for a theory $\Sigma\subseteq Prop$, we define
\begin{itemize}
\item $[\varphi]=\{\psi: T\vdash\varphi\leftrightarrow\psi\}$,
\item $Lind(\Sigma)=\{[\varphi]: \varphi\in Prop\}$,
\item $[\varphi]\le[\psi]$ iff $\Sigma\vdash(\varphi\to\psi)$,
\item $[\top]=[\bot\to\bot]$,
\item $[\varphi]*[\psi]=[\varphi\&\psi]$,
\item $[\varphi]\rightarrowtail[\psi]=[\varphi\to\psi]$,
\end{itemize}
then, $(Lind(\Sigma),\le, *, \rightarrowtail, [\bot],[\top])$ forms a BL-algebra \cite[Lemma 2.3.12]{hajek98}. Actually,
we have the following definition for a BL-algebra.
\begin{definition}\cite[Deinition 2.3.3]{hajek98}
A \emph{BL-algebra} is an algebra $\mathcal{L}=(L, \wedge, \vee, *, \too , 0, 1)$ of type $(2,2,2,2,0,0)$ satisfying the following properties:
\begin{enumerate}[label={\normalfont (BL\arabic*)}]
  \item $(L, \wedge, \vee, 0 ,1)$ is a bounded lattice with the greatest element $1$ and the smallest element $0$,\label{bl1}
  \item $(L, *, 1)$ is an Abelian monoid,\label{bl2}
  \item $\too$ is the residua of $*$, i.e., $c\le a\too b$ iff $c*a\le b$ for all $a,b,c\in L$,\label{bl3}
  \item $a\wedge b=a*(a\too b)$ for all $a,b\in L$,\label{bl4}
  \item $(a\too b)\vee(b\too a)=1$ for all $a,b\in L$.\label{bl5}
\end{enumerate}
\end{definition}
For any continuous t-norm $T$ and its residua $R_T$,
\begin{center}
$[0,1]_T=([0,1], \min, \max, T, R_T, 0, 1)$
\end{center}
forms a BL-algebra \cite[Chapter 2]{hajek98}.
Conversely, when the continuous scale $[0,1]$ endowed to be a BL-algebra, the binary operator $*$ becomes a continuous t-norm on $[0,1]$ \cite{godo99}. The standard BL-algebra on the real segment $[0,1]$ which is defined by  the continuous t-norm $*$, is denoted by $[0,1]_*$.

The following fact, used several times in the outcome results of the paper. Its proof can be found in \cite[Chapter 2]{hajek98}.
\begin{lemma}\label{blprop}
Let $\mathcal{L}=(L, \wedge, \vee, *, \too , 0, 1)$ be
a BL-algebra. The following properties holds in $\mathcal{L}$.
 %\begin{multicols}{2}
%\begin{enumerate}[label={\thelemma.\arabic*}]
\begin{enumerate}[label={\normalfont (B\arabic*)}]
\item\label{b1} $a*b=b*a$ and $(a*b)*c=a*(b*c)$,\label{f1}
\item\label{b3} $a*(a\too b)\le b$ and $a\le b\too(a*b)$,\label{f3}
\item\label{b4} $a\le b$ iff $a\too b=1$,\label{f4}
\item\label{b5} if $a\le b$ then $a*c\le b*c$, $c\too a\le c\too b$, and $a\too c\ge b\too c$,\label{f5}
\item\label{b2} $a*0=0$,\label{f2}
\item\label{b6} $(a\vee b)*c=(a*c)\vee(b*c)$,\label{f6}
\item\label{b7} $a*b\le a$ and $a\le b\too a$,\label{f7}
\item\label{b8} $a\vee b=\big((a\too b)\too b\big)\wedge\big((b\too a)\too a\big)$,\label{f8}
\item\label{b9} $(a\too b)\le (b\too c)\too(a\too c)$,\label{f9}
\item\label{b10} $(a\too b)*(b\too c)\le(a\too c)$,\label{f10}
\item\label{b11} $a\too(b\too c)=(a*b)\too c$,\label{f11}
\item\label{b12} $a\too(b\too c)=b\too(a\too c)$,\label{f12}
\item\label{b13} $a\too a=1$,\label{f13}
\item\label{b14} $a\too b\le(a*c)\too(b*c)$,\label{f14}
\item\label{b15} $(a\too b)*(c\too d)\le(a*c)\too(b*d)$.\label{f15}
\end{enumerate}
%\end{multicols}
\end{lemma}
\section{A topology on BL-algebras makes them semitopological algebras}
In this section, a topology on arbitrary BL-algebras
introduced which makes them semitopological algebras.

From now on, we denote $(a_1,a_2)$ shortly by $\mathbf{a}$. The following crucial definition is needed for Definition \ref{top2}.
\begin{definition}\label{equi}
Let $\mathcal{L}=(L, \wedge, \vee, *, \too , 0, 1)$ be a BL-algebra.
An element $a\in L$ is called \emph{strongly less than $1$}, denoted by $a\ll 1$, whenever
for any $b\in L$, $a\vee b=1$ implies that $b=1$. Furthermore, $\leftrightarrow$ and $\Leftrightarrow$ are operators on $L$ and $L^2$ which are defined respectively as follows:
\begin{equation}\label{equ}
a \leftrightarrow b=(a\too b)*(b\too a)~~~~~,~~~~~
\mathbf{a}\Leftrightarrow\mathbf{b}=(a_1\leftrightarrow b_1)*(a_2\leftrightarrow b_2).
\end{equation}
\end{definition}
In the following lemma, some of the properties of the notions $\ll$, $\leftrightarrow$, and $\Leftrightarrow$ are established.
\begin{lemma}
Let $\mathcal{L}=(L, \wedge, \vee, *, \too , 0, 1)$ be a BL-algebra.
\begin{enumerate}[label={\normalfont (L\arabic*)}]
  \item\label{l1} $0\ll 1$.
  \item For any $a\in L$, if $a\ll 1$, then $a<1$.
  \item For any $a,b\in L$, if $b<a\ll 1$, then $b\ll 1$,
  \item\label{l4} For any $a,b\in L$, if $a\ll 1$ and $b\ll 1$, then $a\vee b\ll 1$.
  \item\label{l5} For any $a,b\in L$, $a\too b\ge a\leftrightarrow b$,
  \item\label{l6} Both of $\leftrightarrow$ and $\Leftrightarrow$ are symmetric.
  \item\label{l7} For any $a\in L$ and $\mathbf{a}\in L^2$,
  $a\leftrightarrow a=1$ and $\mathbf{a}\Leftrightarrow\mathbf{a}=1$.
  \item\label{l8} For any $a,b,c \in L$,
  $(a\leftrightarrow c)*(c\leftrightarrow b)\le a\leftrightarrow b$.
  \item\label{l9} For any $\mathbf{a}, \mathbf{b}, \mathbf{c}\in L^2$,
  $(\mathbf{a}\Leftrightarrow\mathbf{c})*(\mathbf{c}\Leftrightarrow\mathbf{b})
  \le \mathbf{a}\Leftrightarrow\mathbf{b}$.
  \item\label{l10} For any $a,b\in L$, $a\leftrightarrow b=1$ iff $a=b$.
% \item if $a\gg 0$ and $a>b$, then $b\dotto a\gg 0$.
\end{enumerate}
\end{lemma}
\begin{proof}~\\
\begin{enumerate}[label={L\arabic*})]
\item By \ref{bl1}, we know that $0\vee x=x$ for any $x\in L$. So, for any $x\in L$, $0\vee x=1$ implies that $x=1$. Therefore $0\ll 1$.
\item On the contrary, if $a=1$ then for $x\ne 1$, $a\vee x=1$, which is in contradiction with $a\ll 1$.
\item For an arbitrary $x\in L$, assume that $b\vee x=1$. Since $b<a$, $b\vee x\le a\vee x$. So,
$a\vee x=1$ which together with $a\ll 1$ implies that $x=1$. Thus $b\ll 1$.
\item For an arbitrary $x\in L$, assume that $(a\vee b)\vee x=1$. Thus,
$a\vee(b\vee x)=1$. Therefore, $b\vee x=1$. Hence, $x=1$ that is $a\vee b\ll 1$.
\item[]\hspace{-0.8cm}L5, L6, and L7) Follows respectively from \ref{b7}, \ref{b1}, and \ref{b13}.
\item[L8)] By \ref{b10} for any $a,b,c\in L$, $(a\too c)*(c\too b)\le(a\too b)$. Similarly,
$(b\too c)*(c\too a)\le(b\too a)$. Now using \ref{b5} twice together with \ref{b1} implies \ref{l8}.
\begin{align*}
% \nonumber % Remove numbering (before each equation)
  a\leftrightarrow b &=(a\too b)*(b\too a)\\
  &\ge\big((a\too c)*(c\too b)\big)*(b\too a)\\
  &\ge\big((a\too c)*(c\too b)\big)*\big((b\too c)*(c\too a)\big)\\
  &=\big((a\too c)*(c\too a)\big)*\big((b\too c)*(c\too b)\big)\\
  &=(a\leftrightarrow c)*(b\leftrightarrow c)
\end{align*}
\item[L9)] Follows immediately from \ref{l8} together with \ref{b1}.
\begin{align*}
  \mathbf{a}\Leftrightarrow\mathbf{b}&=(a_1\leftrightarrow b_1)* (a_2\leftrightarrow b_2)\\
  &\ge\big((a_1 \leftrightarrow c_1)* (c_1 \leftrightarrow b_1)\big)*
  \big((a_2\leftrightarrow c_2)* (c_2\leftrightarrow b_2)\big)\\
  &=\big((a_1 \leftrightarrow c_1)* (a_2 \leftrightarrow c_2)\big)*
  \big((c_1\leftrightarrow b_1)* (c_2\leftrightarrow b_2)\big)\\
  &=(\mathbf{a}\Leftrightarrow\mathbf{c})*(\mathbf{c}\Leftrightarrow\mathbf{b}).
\end{align*}
\item[L10)]
One direction is obvious from \ref{l7}. For the other direction, if $a\leftrightarrow b=1$, then
$(a\too b)*(b\too a)=1$. So, by \ref{l5}
$a\too b=1$ and $b\too a=1$. Now, by \ref{b4} $b\le a$ and $a\le b$ which implies that $a=b$.
\end{enumerate}
\end{proof}
Now, the expected topology on BL-algebras which makes them topological algebras is as follows.
\begin{definition}\label{top2}
Let $\mathcal{L}=(L, \wedge, \vee, *, \too , 0, 1)$ be a BL-algebra.
For
any elements $a,r\in L$ that $r\ll 1$, the \emph{$*$-ball around $a$ of radius $r$} is the set
\begin{center}
$B_r(a)=\{b\in L: a\leftrightarrow b>r\}.$
\end{center}
Similarly the \emph{$\mathbf{*}$-ball around $\mathbf{a}\in L^2$ of radius $r\ll 1$} is the set
\begin{center}
$\mathbf{B}_r(\mathbf{a})=\{\mathbf{b}\in L^2: \mathbf{a}\Leftrightarrow\mathbf{b}>r\}.$
\end{center}
A subset $G$ of $L$ is called an $*$-open set if for every $a\in G$ there exists a radius $r\ll 1$ such that $B_r(a)\subseteq G$. $*$-open subsets of $L^2$ defined similarly.
\end{definition}
\begin{remark}\label{r2}
By \ref{l7}, $a\in B_r(a)$ and similarly $\mathbf{a}\in\mathbf{B}_r(\mathbf{a})$. Moreover,
if $r\ge s$ then $B_r(a)\subseteq B_s(a)$ and $\mathbf{B}_r(\mathbf{a})\subseteq \mathbf{B}_s(\mathbf{a})$.
\end{remark}
\begin{theorem}\label{tT}
With the notations in Definition \ref{top2}, the family of all $*$-open subsets of $L$
form a topology on $L$ denoted by $T_*$, called the "open ball topology". Similarly $\mathbf{T}_*=\{A : A~\text{is an}~*\text{-open subset of}~L^2\}$ is a topology on $L^2$.
\end{theorem}
\begin{proof}
Obviously $\emptyset, L\in T_*$. Assume that $A, B\in T_*$. If $a\in A\cap B$, then since $A$ and $B$ are $*$-open sets, there exist
$r_A\ll 1$ and $r_B\ll 1$ such that $B_{r_A}(a)\subseteq A$ and $B_{r_B}(a)\subseteq B$. By \ref{l4},  $r=r_A\vee r_B\ll 1$. Since $r\ge r_A$, Remark \ref{r2} implies that $B_r(a)\subseteq B_{r_A}(a)$.
Similarly, $B_r(a)\subseteq B_{r_B}(a)$. Thus $B_r(a)\subseteq B_{r_A}(a)\cap B_{r_B}(a)\subseteq A\cap B$. Hence, $A\cap B$ is an $*$-open set.

Now, let $\{G_i\}_{i\in I}$ be a family of $*$-open sets and $G=\cup_{i\in I}G_i$.
If $G$ is empty there is noting to prove. Assume that $G\ne\emptyset$ and $a\in G$. So, there is
$i\in I$ such that $a\in G_i$. Since $G_i$ is an $*$-open set, there exists $r\ll 1$ such that
$B_r(a)\subseteq G_i\subseteq G$. Thus $G$ is an $*$-open set.

The second part will be proved by a similar argument.
\end{proof}
The following examples describe the introduced topology on BL-algebras more precisely.
\begin{example}\label{luk}
Let $L=[0,1]$, $a * b=\max\{0,a+b-1\}$ which is the \lo t-norm. Then, by residuation relation \ref{res} one can verify that
$a \rightarrowtail b=\min\{1, 1+b-a\}$ \cite[Theorem 2.1.8]{hajek98}. To calculate $a\leftrightarrow b$, if $a\le b$ then $b-a\ge 0$ and therefore $a \rightarrowtail b=1$ and $b \rightarrowtail a=1+a-b$ and therefore $a\leftrightarrow b=1*(1+a-b)=1+a-b$. Similarly, if $b\le a$, then
$a\leftrightarrow b=1+b-a$. Consequently, we have $a\leftrightarrow b=1-|a-b|$. In addition, $[0,1]$ is a linearly ordered BL-algebra and therefore $a\ll b$ and $a<b$ have the same meaning. So, for any $a\in[0,1]$ and $r<1$,
\begin{eqnarray*}
% \nonumber % Remove numbering (before each equation)
B_r(a) & = & \{b: a\leftrightarrow b>r\}\\
& = & \{b\in[0,1]: 1-|a-b|>r\}\\
& = & \{b\in[0,1]: |a-b|<1-r\}\\
& = & \left(a-(1-r),a+(1-r)\right)\cap[0,1]\\
& = & \left(a-(1-r),a+(1-r)\right).
\end{eqnarray*}
For example $B_{0.5}(a)=(a-0.5,a+0.5)$ and
$B_{0.2}(a)=(a-0.8,a+0.8)$ and
$B_{0.7}(a)=(a-0.3,a+0.3)$.
Verily, the $*$-ball $B_r(a)$ in $T_*$ is the open ball around $a$ of radius $1-r$ in the Euclidean topology and therefore $T_*$ is equivalent to the Euclidean topology on $[0,1]$.
\end{example}
\begin{example}\label{e2}
Let $L=\{0,a,b,c,1\}$. Define $*$ and
$\rightarrowtail$ on $L$ as follows.
\vspace{0.1cm}
\begin{center}
\begin{minipage}{1.5in}
%\definecolor{Gray}{gray}{0.85}
%\definecolor{LightCyan}{rgb}{0.88,1,1}
%\newcolumntype{a}{>{\columncolor{Gray}}c}
%\newcolumntype{b}{>{\columncolor{white}}c}
%\newcolumntype{cc}{>{$}c<{$}}
\begin{tabular}{|>{\columncolor{black!20}}>{$}c<{$}|>{$}c<{$}|>{$}c<{$}|>{$}c<{$}|>{$}c<{$}|>{$}c<{$}|}
  \hline
  % after \\: \hline or \cline{col1-col2} \cline{col3-col4} ...
  \rowcolor{black!20}
  ~*~ & 0 & a & b & c & 1 \\
  \hline
  0 & 0 & 0 & 0 & 0 & 0 \\
  \hline
  a & 0 & a & c & c & a \\
  \hline
  b & 0 & c & b & c & b \\
  \hline
  c & 0 & c & c & c & c \\
  \hline
  1 & 0 & a & b & c & 1 \\
  \hline
\end{tabular}
\end{minipage}
\hspace{1cm}
\begin{minipage}{1.5in}
%\begin{tabular}{|>{\columncolor{black!20}}c|c|c|c|c|c|}
\begin{tabular}{|>{\columncolor{black!20}}>{$}c<{$}|>{$}c<{$}|>{$}c<{$}|>{$}c<{$}|>{$}c<{$}|>{$}c<{$}|}
  \hline
  % after \\: \hline or \cline{col1-col2} \cline{col3-col4} ...
  \rowcolor{black!20}
  \rightarrowtail & 0 & a & b & c & 1 \\
  \hline
  0 & 1 & 1 & 1 & 1 & 1 \\
  \hline
  a & 0 & 1 & b & b & 1 \\
  \hline
  b & 0 & a & 1 & a & 1 \\
  \hline
  c & 0 & 1 & 1 & 1 & 1 \\
  \hline
  1 & 0 & a & b & c & 1 \\
  \hline
\end{tabular}
\end{minipage}
\end{center}
\vspace{0.2cm}
Obviously, $L$ is a BL-algebra. The Hasse diagram of $L$ will be illustrated in Figure \ref{L1}.
\begin{figure}[h!]
  \centering
  \begin{tikzpicture}[scale=.7]
  \node [inner sep=0pt, minimum size=2.5pt, circle, fill, draw,label=above:1] (one) at (0,2) {};
  \node [inner sep=0pt, minimum size=2.5pt, circle, fill, draw,label=left:a](a) at (-1,1) {};
  \node [inner sep=0pt, minimum size=2.5pt, circle, fill, draw,label=right:b](b) at (1,1) {};
  \node [inner sep=0pt, minimum size=2.5pt, circle, fill, draw,label=above:c](c) at (0,0) {};
  \node [inner sep=0pt, minimum size=2.5pt, circle, fill, draw,label=below:0](zero) at (0,-1) {};
  \draw (zero) -- (c) -- (a) -- (one) -- (b) -- (c);
  \end{tikzpicture}
  \caption{Hasse diagram of $L$}\label{L1}
\end{figure}

So, an easy argument leads to the following table for $\leftrightarrow$.
\vspace{0.1cm}
\begin{center}
\begin{tabular}{|>{\columncolor{black!20}}>{$}c<{$}|>{$}c<{$}|>{$}c<{$}|>{$}c<{$}|>{$}c<{$}|>{$}c<{$}|}
  \hline
  % after \\: \hline or \cline{col1-col2} \cline{col3-col4} ...
  \rowcolor{black!20}
  \leftrightarrow & 0 & a & b & c & 1 \\
  \hline
  0 & 1 & 0 & 0 & 0 & 0 \\
  \hline
  a & 0 & 1 & c & b & a \\
  \hline
  b & 0 & c & 1 & a & b \\
  \hline
  c & 0 & b & a & 1 & c \\
  \hline
  1 & 0 & a & b & c & 1 \\
  \hline
\end{tabular}
\end{center}
\vspace{0.2cm}
By the Hasse diagram of $L$,
$R=\{r: r\ll 1\}=\{0,c\}$.
All $*$-balls of $T_*$, i.e. $B_r(x)=\{y: x\leftrightarrow y>r\}_{r\in R, x\in L}$, are as follows:
\begin{center}
\begin{tabular}{l l}
  $B_r(0)=\{0\}~~~\forall r\in R$ & $B_0(x)=\{a,b,c,1\}~~~\forall x>0$\\
  $B_c(a)=\{a,c,1\}$ & $B_c(b)=\{b,c,1\}$\\
  $B_c(c)=\{a,b,c\}$ & $B_c(1)=\{a,b,1\}$
\end{tabular}
\end{center}
So,
$T_*=\left\{\emptyset, \{0\}, \{a,b,c,1\}, \{0,a,b,c,1\}\right\}$.
Note that by definition of $*$-open sets, an $*$-ball is not necessarily an $*$-open set.
\end{example}
Above examples showed that the introduced topology on BL-algebras is not trivial.
Indeed, this topology is obtained from the distance between elements of a BL-algebra
with respect to the $\leftrightarrow$.

In Section 5, we will show the open ball topology on BL-algebras could be described more explicitly by a kind of duality between algebras. The topological equivalence of $T_*$ and the Euclidean topology in Example \ref{luk} could be explained by this duality as well.

Besides \ref{bl4} and \ref{b8}, the following theorem indicates that for any BL-algebra $\mathcal{L}=(L, \wedge, \vee, *, \too , 0, 1)$, all the operators of $\mathcal{L}$ are continuous functions with respect to the introduced topologies $T_*$ and $\mathbf{T}_*$ on $L$ and $L^2$.
\begin{theorem}\label{main1}
If $\mathcal{L}=(L, \wedge, \vee, *, \too , 0, 1)$ is a BL-algebra and
$T_*$ and $\mathbf{T}_*$ are the same as in Theorem \ref{tT}, then
the mappings $*:(L^2,\mathbf{T}_*)\to(L,T_*)$ and $\too:(L^2,\mathbf{T}_*)\to(L,T_*)$ would be continuous functions.
\end{theorem}
\begin{proof}
Consider an $*$-open set $A\in T_*$. We must verify that the inverse images of $A$, $*^{-1}(A)$ and $\too^{-1}(A)$ are $*$-open subsets of $L^2$.

Firstly, consider a point $\mathbf{a}\in *^{-1}(A)$, that is $a_1 * a_2\in A$. Since $A$ is
an $*$-open set, there exists $r\ll 1$ such that
$B_r(a_1 * a_2)\subseteq A$. To finalize the first part of proof, we will show that
$\mathbf{B}_r(\mathbf{a})\subseteq *^{-1}(A)$. Consider an element $\mathbf{b}\in\mathbf{B}_r(\mathbf{a})$. So, $\mathbf{a}\Leftrightarrow\mathbf{b}>r$, that is
\begin{equation}\label{a11}
(a_1\leftrightarrow b_1)*(a_2\leftrightarrow b_2)>r.
\end{equation}
In addition since by \ref{b15} we have
\begin{center}
$(a_1 * a_2)\too (b_1 * b_2)\ge (a_1\too b_1)*(a_2\too b_2)$
\end{center}
and
\begin{center}
$(b_1 * b_2)\too (a_1 * a_2)\ge (b_1\too a_1)*(b_2\too a_2)$,
\end{center}
so, applying \ref{b5} twice and then using \ref{b1} and \ref{a11} we get
\begin{align*}
% \nonumber % Remove numbering (before each equation)
&(a_1 * a_2)\leftrightarrow(b_1 * b_2)\\
&~~~~~~~~~~~~~~~=\big((a_1 * a_2)\too (b_1 * b_2)\big)
*\big((b_1 * b_2)\too (a_1 * a_2)\big)\\
&~~~~~~~~~~~~~~~\ge \big((a_1\too b_1)*(a_2\too b_2)\big)*\big((b_1 * b_2)\too (a_1 * a_2)\big)\\
&~~~~~~~~~~~~~~~\ge \big((a_1\too b_1)*(a_2\too b_2)\big)*\big((b_1\too a_1)*(b_2\too a_2)\big)\\
&~~~~~~~~~~~~~~~= \big((a_1\too b_1)*(b_1\too a_1)\big)*\big((a_2\too b_2)*(b_2\too a_2)\big)\\
&~~~~~~~~~~~~~~~= (a_1\leftrightarrow b_1)*(a_2\leftrightarrow b_2)\\
&~~~~~~~~~~~~~~~> r.
\end{align*}
Thus $b_1 * b_2\in B_r(a_1 * a_2)\subseteq A$. Hence $\mathbf{b}\in *^{-1}(A)$, completes the first part of the proof.

Secondly, to prove that $\too^{-1}(A)$ is an $*$-open subset of $L^2$, consider a point $\mathbf{a}\in \too^{-1}(A)$. So $a_1 \too a_2\in A$. Since $A$ is
an $*$-open set, there exists $r\ll 1$ that
$B_r(a_1\too a_2)\subseteq A$. To prove that
$\too^{-1}(A)$ is $*$-open, we investigate that
$\mathbf{B}_r(\mathbf{a})\subseteq \too^{-1}(A)$. To this end, if
$\mathbf{b}\in\mathbf{B}_r(\mathbf{a})$, then
\begin{equation}\label{a12}
\mathbf{a}\Leftrightarrow\mathbf{b}>r.
\end{equation}
By \ref{b9}
$a_1\too b_1\le (b_1\too b_2)\too(a_1\too b_2)$ and by \ref{bl3}
\begin{equation}\label{a22}
(a_1\too b_1)*(b_1\too b_2)\le(a_1\too b_2).
\end{equation}
Again by \ref{b9}
$a_1\too b_2\le (b_2\too a_2)\too(a_1\too a_2)$ and therefore by \ref{a22}
\begin{center}
$(a_1\too b_1)*(b_1\too b_2)\le(b_2\too a_2)\too(a_1\too a_2)$.
\end{center}
Now applying \ref{bl3} we have
\begin{center}
$\big((a_1\too b_1)*(b_1\too b_2)\big)*(b_2\too a_2)\le(a_1\too a_2)$
\end{center}
which besides \ref{b1} leads to
\begin{center}
$\big((a_1\too b_1)*(b_2\too a_2)\big)*(b_1\too b_2)\le(a_1\too a_2)$.
\end{center}
Again \ref{bl3} implies that
\begin{equation}\label{a33}
(b_1\too b_2)\too(a_1\too a_2)\ge (a_1\too b_1)*(b_2\too a_2).
\end{equation}
Analogously
\begin{equation}\label{a44}
(a_1\too a_2)\too(b_1\too b_2)\ge (b_1\too a_1)*(a_2\too b_2).
\end{equation}
Now, applying \ref{a33}, \ref{a44}, and \ref{b5} we see that
\begin{align*}
&(a_1\too a_2)\leftrightarrow(b_1\too b_2)\\
&~~~~~~~~~~~~~~~=
\big((a_1\too a_2)\too(b_1\too b_2)\big)*
\big((b_1\too b_2)\too(a_1\too a_2)\big)\\
&~~~~~~~~~~~~~~~\ge
\big((b_1\too a_1)*(a_2\too b_2)\big)*
\big((b_1\too b_2)\too(a_1\too a_2)\big)\\
&~~~~~~~~~~~~~~~\ge
\big((b_1\too a_1)*(a_2\too b_2)\big)*
\big((a_1\too b_1)*(b_2\too a_2)\big)\\
&~~~~~~~~~~~~~~~=
\big((b_1\too a_1)*(a_1\too b_1)\big)*
\big((a_2\too b_2)*(b_2\too a_2)\big)\\
&~~~~~~~~~~~~~~~=(a_1\leftrightarrow b_1)*(a_2\leftrightarrow b_2)\\
&~~~~~~~~~~~~~~~=\mathbf{a}\Leftrightarrow\mathbf{b}.
\end{align*}
Therefore, \ref{a12} implies that
$(a_1\too a_2)\leftrightarrow(b_1\too b_2)>r$ which means that is $(b_1\too b_2)\in B_r(a_1\too a_2)\subseteq A$.
Hence $\mathbf{b}\in \too^{-1}(A)$.
\end{proof}
Although the continuous scale $[0,1]_*=([0,1], \wedge, \vee, *, \too, 0, 1)$
endowed to be a BL-algebra and consequently $*$ becomes a continuous t-norm, all the operators of
$[0,1]_*$ are not necessarily continuous with respect to the usual topology on $[0,1]$.
However, Theorem \ref{main1} shows that the introduced topologies $T_*$ and $\mathbf{T}_*$ makes all the operators of any BL-algebra continuous.
\begin{example}\label{luk}
Let $L=[0,1]$. Consider the \g t-norm on $[0,1]$, that is $a * b=\min\{a,b\}$. The residuation relation \ref{res} implies that
$a \rightarrowtail b=\left\{\begin{array}{cc}
1& a\le b\\
b& a>b
\end{array}\right.$ \cite[Theorem 2.1.8]{hajek98}. An easy argument shows that in spite of the continuity of $*$ with respect to the Euclidean topology on $[0,1]$ and $[0,1]^2$, the function $\rightarrowtail$ is not a continuous function. However, Theorem \ref{main1} shows that both $*$ and $\rightarrowtail$ are continuous functions with respect to the topologies $T_*$ and $\mathbf{T}_*$ on $[0,1]$ and $[0,1]^2$.
\end{example}
Now, for any BL-algebra $\mathcal{L}=(L, \wedge, \vee, *, \too , 0, 1)$, we are going to show that the introduced topology $T_*$ makes $\mathcal{L}$ a semitopological BL-algebra.

Recall from \cite{zah-bor-2016} and \cite{bor-rez-kou-2011} that a semitopological
algebra is an algebra $\mathcal{L}=(L, *)$ of type $(2)$ together with a
topology $\tau$ on $L$ such that for all $\delta\in L$ the maps $*^\delta_l: (L, \tau) \to (L, \tau)$ and
$*^\delta_r: (L, \tau)\to (L, \tau)$ defined respectively by
$*^\delta_l(x)= \delta * x$ and $*^\delta_r(x)= x * \delta$ are continuous functions.
\begin{definition}\label{semitop}
Let $\mathcal{L}=(L, \wedge, \vee, *, \too , 0, 1)$ be a BL-algebra. If there exists a
topology $\tau$ on $L$ such that for any $\square\in\{\wedge, \vee, *, \too\}$ and any
$\delta\in L$ the maps $\square^\delta_l: (L, \tau) \to (L, \tau)$ and
$\square^\delta_r: (L, \tau)\to (L, \tau)$ defined respectively by
$\square^\delta_l(x)= \delta \square x$ and $\square^\delta_r(x)= x \square \delta$ are continuous functions, then $(\mathcal{L}, \tau)$ is called a \emph{semitopological BL-algebra}.
\end{definition}
\begin{theorem}\label{semi1}
If $\mathcal{L}=(L, \wedge, \vee, *, \too , 0, 1)$ is a BL-algebra and
$T_*$ is the same as in Theorem \ref{tT}, then $(\mathcal{L},T_*)$ forms a
semitopological BL-algebra.
\end{theorem}
\begin{proof}
Besides \ref{bl4} and \ref{b8} it's enough to clarify that for any $\delta\in L$ the mappings
$*^\delta_l$ ,$*^\delta_r$, $\too^\delta_l$, and $\too^\delta_r$ are
continuous functions. We only do the proof for $*^\delta_l$ and others will be proved in a similar way.

Let $A\in T_*$. We must prove that $(*^\delta_l)^{-1}(A)\in T_*$. So, assume that
$a\in (*^\delta_l)^{-1}(A)$ that is $*^\delta_l(a)\in A$. Hence
$\delta * a\in A$. Since $A$ is an $*$-open set, there exists $r\ll 1$ such that
$B_r(\delta * a)\subseteq A$. We claim that
$B_r(a)\subseteq (*^\delta_l)^{-1}(A)$ which implies that $(*^\delta_l)^{-1}(A)$ is an
$*$-open set and fulfills the proof. To this end, consider an arbitrary element
$b\in B_r(a)$ that is $a\leftrightarrow b>r$. Now, \ref{b15} implies that
\begin{align*}
% \nonumber % Remove numbering (before each equation)
 (\delta * b)\leftrightarrow(\delta* a) & = (\delta * b)\too (\delta * a)\big)*
\big((\delta * a)\too (\delta * b)\big)\\
 & \ge \big((\delta\too \delta)*(b\too a)\big)
*\big((\delta\too \delta)*(a\too b)\big)\\
 & =  \big(1 *(b\too a)\big)
*\big(1 * (a\too b)\big)\\
 & =  (b\too a)*(a\too b)\\
 & =  a\leftrightarrow b\\
 & > r.
 \end{align*}
So $\delta * b\in B_r(\delta * a)\subseteq A$ means that
$*^\delta_l(b)\in A$. Therefore $b\in (*^\delta_l)^{-1}(A)$ which
completes the proof.
\end{proof}
\section{Some properties of the open ball topology on $[0,1]$}
Example \ref{e2} shows that the $*$-balls are not necessarily $*$-open set.
Furthermore, it shows that $T_*$ does not admit the weakest separation axiom $T_0$. However, when the continuous scale $[0,1]$ endowed to be a BL-algebra, we will show that $T_*$ admits some nice properties.

First of all, the following theorem shows that when $[0,1]$ is endowed to be a BL-algebra, then like  metric spaces, $*$-balls are $*$-open set.
\begin{theorem}
If $[0,1]_*=([0,1],\min,\max, *, \too, 0, 1)$ is a BL-algebra, then, $*$-balls are $*$-open set.
\end{theorem}
\begin{proof}
Note that since $[0,1]$ is linearly ordered, $\ll$ and $<$ have the same meaning. For any $*$-ball $B_r(a)$ and any $b\in B_r(a)$, we must find $\epsilon<1$ such that $B_\epsilon(b)\subseteq B_r(a)$.
To this end, let $\epsilon=(a\leftrightarrow b)\too r$. Since $b\in B_r(a)$, $a\leftrightarrow b>r$ and therefore \ref{b4} implies that
\begin{center}
$\epsilon=(a\leftrightarrow b)\too r<1$.
\end{center}
Now, if $c\in B_\epsilon(b)$ then $b\leftrightarrow c>\epsilon$ that is
%\begin{center}
$b\leftrightarrow c> (a\leftrightarrow b)\too r$.
%\end{center}
Thus, \ref{bl3} implies that $(b\leftrightarrow c)*(a\leftrightarrow b)>r$. Consequently by \ref{l8} and \ref{b1} we get
\begin{center}
$a\leftrightarrow c\ge (a\leftrightarrow b)*(b\leftrightarrow c)
=(b\leftrightarrow c)*(a\leftrightarrow b)>r$
\end{center}
which means that $c\in B_r(a)$. Hence $B_\epsilon(b)\subseteq B_r(a)$ which entails that $B_r(a)$ is an $*$-open set.
\end{proof}
Now, we want to examine the most famous separation axiom for $T_*$ on $[0,1]$ such as a BL-algebra.
\begin{theorem}
If $[0,1]_*=([0,1],\min, \max, *, \too, 0, 1)$ is a BL-algebra, then, $T_*$ would be a Hausdorff topology on $[0,1]$.
\end{theorem}
\begin{proof}
Since $[0,1]_*$ is a BL-algebra, so $*$ is a continuous t-norm on $[0,1]$ (with respect to the usual topology on $[0,1]$). Now, if $a, b$ are two distinct element of $[0,1]$, then there exists $r<1$ that
\begin{equation}\label{ww}
a\leftrightarrow b<r*r.
\end{equation}
Indeed, otherwise $a\leftrightarrow b\ge r*r$ for any $r<1$, which together with the fact that $*$ is a continuous t-norm, implies that $a\leftrightarrow b=1$ and therefore by \ref{l10} $a=b$, a contradiction. To complete the proof, we show that $B_r(a)\cap B_r(b)=\emptyset$. Indeed, if
$c\in B_r(a)\cap B_r(b)$, then \ref{l8} and \ref{ww} leads to the following contradiction.
\begin{center}
$a\leftrightarrow b \ge (a\leftrightarrow c)*(c\leftrightarrow b)\ge r*r>a\leftrightarrow b$.
\end{center}
\end{proof}
%***************************
\section{Describing the open ball topology by means of duality}
In this section, following our conference article \cite{khatami2018}, we show that if one consider a
dual notion of BL-algebras, then the open ball topology could be described by a metric-like topology.

Firstly, consider the following dual notion for BL-algebras.
\begin{definition}\label{dbl-algebra}%\cite{khatami2018}
An \emph{SL-algebra}, is an algebra
$\mathcal{L}=(L, \dota, \dotv, \star, \dotto , 0, 1)$ of type $(2,2,2,2,0,0)$ that satisfies the following conditions.
\begin{enumerate}[label={\normalfont (SL\arabic*)}]
  \item $(L, \dota, \dotv, 0 ,1)$ is a bounded lattice
  with the greatest element $1$ and the smallest element $0$.
  Note that here, $a\le b$ iff $a\dotv b=a$. So, $a\dotv b=\inf\{a,b\}$ and $a\dota b=\sup\{a,b\}$. \label{db1}
  \item $(L, \star, 0)$ is an Abelian monoid,\label{db2}
  \item $\dotto$ is the residua of $\star$, i.e., $a\ge b\dotto c$ iff $a \star b\ge c$ for all $a,b,c\in L$,\label{db3}
  \item $a\dota b=a\star(a\dotto b)$ for all $a,b\in L$,\label{db4}
  \item $(a\dotto b)\dotv(b\dotto a)=0$ for all $a,b\in L$.\label{db5}
\end{enumerate}
\end{definition}
Note that \ref{db3} implies that $b\dotto c=\inf\{a: a\star b\ge c\}$.
\begin{example}
If $S$ is a continuous s-norm, and the residua of $S$ is defined by
$R_S(a,b)=\inf\{c: S(c,a)\ge b\}$,
then
\begin{center}
$[0,1]_S=([0,1], \max, \min, S, R_S, 0, 1)$
\end{center}
forms an SL-algebra.
\end{example}

When $[0,1]_\star=([0,1], \dota, \dotv, \star, \dotto, 0, 1)$ is endowed
to be an SL-algebra, then $\star$ becomes a continuous s-norm, $\dotto$ would be the
residua of $\star$ and therefore $\dota$ and $\dotv$ becomes the maximum and minimum functions, respectively \cite[dual form of Proposition 3]{godo99}.

The following theorem is an obvious consequence of duality between BL-algebras and SL-algebras
which follows from Proposition \ref{blprop}.
\begin{theorem}
Let $\mathcal{L}=(L, \dota, \dotv, \star, \dotto , 0, 1)$ be an SL-algebra.
The following properties hold in $\mathcal{L}$.
%\begin{multicols}{2}
%\begin{enumerate}[label={\thelemma.\arabic*}]
\begin{enumerate}[label={\normalfont (S\arabic*)}]
\item $a\star b=b\star a$ and $(a\star b)\star c=a\star (b\star c)$,\label{d1}
\item $a\star (a\dotto b)\ge b$ and $a\ge b\dotto(a\star b)$,\label{d3}
\item $a\ge b$ iff $a\dotto b=0$,\label{d4}
\item if $a\ge b$ then $a\star c\ge b\star c$, $c\dotto a\ge c\dotto b$, and $a\dotto c\le b\dotto c$,\label{d5}
\item $a\star 1=1$,\label{d2}
\item $(a\dotv b)\star c=(a\star c)\dotv(b\star c)$,\label{d6}
\item $a\star b\ge a$ and $a\ge b\dotto a$,\label{d7}
\item $a\dotv b=\big((a\dotto b)\dotto b\big)\dota\big((b\dotto a)\dotto a\big)$,\label{d8}
\item $(a\dotto b)\ge (b\dotto c)\dotto(a\dotto c)$,\label{d9}
\item $(a\dotto b)\star(b\dotto c)\ge(a\dotto c)$,\label{d10}
\item $a\dotto(b\dotto c)=(a\star b)\dotto c$,\label{d11}
%\item $(a\dotto b)\dotto c\ge\big((b\dotto a)\dotto c\big)\dotto c$,
\item $a\dotto(b\dotto c)=b\dotto(a\dotto c)$,\label{d12}
\item $a\dotto a=0$,\label{d13}
\item $a\dotto b\ge(a\star c)\dotto(b\star c)$,\label{d14}
\item $(a\dotto b)\star (c\dotto d)\ge(a\star c)\dotto(b\star d)$,\label{d15}
\end{enumerate}
%\end{multicols}
\end{theorem}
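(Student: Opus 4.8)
The plan is not to verify the fifteen identities individually but to recognise that the axioms \ref{db1}--\ref{db5} are exactly the order-theoretic dual of \ref{bl1}--\ref{bl5}, and then to transport Proposition \ref{blprop} across this duality. Given a DBL-algebra $\mathcal{L}=(L, \dota, \dotv, \star, \dotto, 0, 1)$, I would introduce the reversed order $a\le^{\mathrm{op}}b$ iff $b\le a$ and consider the structure $\mathcal{L}^{\mathrm{op}}$ on the same carrier whose meet is $\dota$, whose join is $\dotv$, whose monoid operation is $\star$, whose residual is $\dotto$, whose greatest element is $0$ and whose least element is $1$. The swap of meet and join, and of the two constants, is forced by reversing the order: under $\le^{\mathrm{op}}$ the former supremum $\dota$ is a meet, the former infimum $\dotv$ is a join, and $0$ becomes the top. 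The first step is to check that $\mathcal{L}^{\mathrm{op}}$ is a BL-algebra.

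This amounts to five verifications, one per axiom, and four of them are immediate: the order-reversal of the bounded lattice of \ref{db1} is again a bounded lattice, giving \ref{bl1}; the monoid \ref{db2} has unit $0$, which is precisely the top of $\mathcal{L}^{\mathrm{op}}$, matching the requirement of \ref{bl2}; and \ref{db4} and \ref{db5} are literally \ref{bl4} and \ref{bl5} for $\mathcal{L}^{\mathrm{op}}$ once one identifies its meet with $\dota$, its join with $\dotv$, and its top with $0$. The one substantive check — and the place where the sign of an inequality is easiest to flip — is the residuation. I expect this to be the main obstacle, though a mild one: the target \ref{bl3} for $\mathcal{L}^{\mathrm{op}}$ reads $z\le^{\mathrm{op}}a\dotto b$ iff $z\star a\le^{\mathrm{op}}b$, and unwinding each $\le^{\mathrm{op}}$ into $\ge$ turns this into $z\ge a\dotto b$ iff $z\star a\ge b$, which is exactly \ref{db3} with its universally quantified variables renamed. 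Thus $\mathcal{L}^{\mathrm{op}}$ satisfies \ref{bl1}--\ref{bl5}.

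The last step is to apply Proposition \ref{blprop} to $\mathcal{L}^{\mathrm{op}}$ and read each of \ref{b1}--\ref{b15} back into $\mathcal{L}$ through the dictionary $\le\leftrightarrow\ge$, $\wedge\leftrightarrow\dota$, $\vee\leftrightarrow\dotv$, $*\leftrightarrow\star$, $\too\leftrightarrow\dotto$, $0\leftrightarrow1$. Under this dictionary \ref{b1}--\ref{b15} become precisely \ref{d1}--\ref{d15}: for example \ref{b2}, $a*0=0$, has absorbing constant $0$, which is the top of $\mathcal{L}^{\mathrm{op}}$, and so returns \ref{d2}, $a\star1=1$; \ref{b4}, $a\le b$ iff $a\too b=1$, returns \ref{d4}, $a\ge b$ iff $a\dotto b=0$; and the meet and join in \ref{b6} and \ref{b8} are interchanged to yield \ref{d6} and \ref{d8}. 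I would display this dictionary explicitly so that the reader can match each pair mechanically. Should a reader prefer a self-contained argument, one may instead re-derive \ref{d1}--\ref{d15} directly from \ref{db1}--\ref{db5} by imitating the proofs in \cite[Chapter~2]{hajek98}, but that route only reconstructs the same duality by hand and is considerably longer.
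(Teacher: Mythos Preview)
Your argument is correct, and it is a genuinely different route from the paper's. The paper does not formalise the passage $\mathcal{L}\mapsto\mathcal{L}^{\mathrm{op}}$; instead it remarks that ``the proof is similar to the proof of Proposition~\ref{blprop}'' and then writes out all fifteen verifications \ref{d1}--\ref{d15} by hand, dualising each step of the corresponding BL-algebra argument directly from the axioms \ref{db1}--\ref{db5}. Your approach packages the duality once, checks the five axioms \ref{bl1}--\ref{bl5} for $\mathcal{L}^{\mathrm{op}}$, and then invokes Proposition~\ref{blprop} wholesale, which is shorter, more conceptual, and reusable for any further dual statements one might want later. The paper's approach, by contrast, is more self-contained: since Proposition~\ref{blprop} in the paper merely cites \cite[Chapter~2]{hajek98} without proof, spelling out \ref{d1}--\ref{d15} explicitly also serves as a reminder of how the BL identities themselves are derived, and spares the reader the (admittedly mild) bookkeeping of tracking which constant plays the role of top in $\mathcal{L}^{\mathrm{op}}$.
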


The key point that we interested in dual of BL-algebras, is that
when $[0,1]_\star=([0,1], \max, \min, \star, \dotto, 0, 1)$ is endowed to be an SL-algebra, then in most cases the dual of the notion $\leftrightarrow$ forms a metric.
\begin{example}\label{luka}
Let $L=[0,1]$, $a\star b=\min\{0,a+b\}$ which is the \lo s-norm, and $\dotto$ be the residua of $\star$. For $a,b\in [0,1]$
\begin{itemize}
  \item if $a\ge b$ then by \ref{d5} and \ref{db2} for any $c\in [0,1]$,
  \begin{center}
  $c\star a\ge c\star b\ge 0\star b=b$
  \end{center}
   and therefore
  \begin{center}
   $a\dotto b=\inf\{c: c\star a\ge b\}=\inf\{c: c\in[0,1]\}=0$,
  \end{center}
  \item if $a< b$ then $c\star a\ge b$ iff $c+a\ge b$ iff $c\ge b-a$ and therefore
  \begin{center}
  $a\dotto b=\inf\{c: c\star a\ge b\}=\inf\{c: c\ge b-a\}=b-a$.
  \end{center}
\end{itemize}
Thus
\begin{center}
$a\dotto b=
\left\{\begin{array}{cc}
0& a\ge b\\
b-a& a<b
\end{array}\right.$.
\end{center}
Now , if $a\ge b$ then $a\dotto b=0$, $b\dotto a=a-b$, and so we have
  \begin{center}
  $(a\dotto b)\star (b\dotto a)=\min\{1,0+a-b\}=a-b$.
  \end{center}
Similarly if $a\le b$ then $(a\dotto b)\star (b\dotto a)=b-a$.
Thus,
\begin{center}
$(a\dotto b)\star (b\dotto a)=|a-b|$
\end{center}
which is the Euclidean metric on $[0,1]$.
\end{example}
\begin{example}\label{godel2}
Let $L=[0,1]$, $a\star b=\max\{a,b\}$ which is the \g s-norm, and $\dotto$ be the residua of $\star$. An argument such as the one in Example \ref{luka} shows that
\begin{center}
$a\dotto b=
\left\{\begin{array}{cc}
0& a\ge b\\
b& a<b
\end{array}\right.$
\end{center}
and
\begin{center}
$(a\dotto b)\star (b\dotto a)=
\left\{\begin{array}{cc}
0& a=b\\
\max\{a,b\}& a\ne b
\end{array}\right.$.
\end{center}
Again, note that $(a\dotto b)\star (b\dotto a)$ is a metric on $[0,1]$.
\end{example}
Certainly we have the following fact.
\begin{theorem}\label{metr}
Assume that $\mathcal{L}=(L, \dota, \dotv, \star, \dotto , 0, 1)$ is an SL-algebra.
Suppose that the mappings $d_\star:L\times L\to L$ is defined by
\begin{center}
$d_\star(a,b)=(a\dotto b)\star(b\dotto a)$.
\end{center}
Then,
\begin{enumerate}
\item $\forall a, b$, $d_\star(a,b)=0$ iff $a=b$,
\item $\forall a, b$, $d_\star(a,b)=d_\star(b,a)$,
\item $\forall a, b, c$, $d_\star(a,b)\le d_\star(a,c)\star d_\star(c,b)$,\label{3}
\item if $L=[0,1]$ and for any $a,b\in[0,1]$, $a\star b\le S_L(a,b)$, then $d_\star$ is a metric on $[0,1]$.
\end{enumerate}
\end{theorem}
\begin{proof}~
\begin{itemize}
  \item [1)] Follows as like as \ref{l10} from \ref{d13}, \ref{d7}, and \ref{d4}.
  \item [2)] Follows as like as \ref{l6} from \ref{d1}.
  \item [3)] Follows as like as \ref{l8} from \ref{d10} and \ref{d5}.
  \item [4)] If $a\star b\le S_L(a,b)$ holds for any $a,b\in[0,1]$, then $(3)$ implies that
%\begin{center}
$d_\star(a,b)\le d_\star(a,c)\star d_\star(c,b)\le d_\star(a,c)+d_\star(c,b)$,
%\end{center}
for any $a,b,c\in[0,1]$, means that $d_\star$ is a metric on $[0,1]$.
\end{itemize}
\end{proof}
A similar argument such as Theorem \ref{metr} holds for the dual notion of $\Leftrightarrow$ which is denoted by $\mathbf{d}_\star$.
\begin{theorem}
Assume that $\mathcal{L}=(L, \dota, \dotv, \star, \dotto , 0, 1)$ is an SL-algebra and define the mappings $\mathbf{d}_\star:L^2\times L^2\to L$ by
\begin{center}
$\mathbf{d}_\star(\mathbf{a},\mathbf{b})=d_\star(a_1,b_1)\star d_\star(a_2,b_2)$.
\end{center}
Then,
\begin{enumerate}
\item $\forall \mathbf{a}, \mathbf{b}$, $\mathbf{d}_\star(\mathbf{a},\mathbf{b})=0$ iff $\mathbf{a}=\mathbf{b}$,
\item $\forall \mathbf{a}, \mathbf{b}$, $\mathbf{d}_\star(\mathbf{a},\mathbf{b})=\mathbf{d}_\star(\mathbf{b},\mathbf{a})$,
\item $\forall \mathbf{a}, \mathbf{b}, \mathbf{c}$,
$\mathbf{d}_\star(\mathbf{a},\mathbf{b})\le \mathbf{d}_\star(\mathbf{a},\mathbf{c})\star
\mathbf{d}_\star(\mathbf{b},\mathbf{c})$,
\item If $L=[0,1]$ and for any $a,b\in[0,1]$, $a\star b\le S_L(a,b)$, then $\mathbf{d}_\star$ is a metric on $[0,1]^2$.
\end{enumerate}
\end{theorem}
\begin{proof}
Similar to the proof of Theorem \ref{metr}.
\end{proof}
Now, for any SL-algebra $\mathcal{L}=(L, \dota, \dotv, \star, \dotto , 0, 1)$, the metric-like topologies on $L$ and $L^2$ could be constructed as the one introduced for BL-algebras in Theorem \ref{tT} which made all the operators of $\mathcal{L}$ continuous.
\begin{theorem}\label{topthe}
Let $\mathcal{L}=(L, \dota, \dotv, \star, \dotto , 0, 1)$ be an SL-algebra. For an
element $a\in L$, write $a\gg 0$ whenever for any $b\in L$, $a\dotv b=0$ implies that $b=0$.
For any $a\in L$ and $\mathbf{a}\in L^2$ and $r\gg 0$, suppose that
%\begin{center}
$N_r(a)=\{b\in L: d_\star(a,b)<r\}$
%\end{center}
and
%\begin{center}
$\mathbf{N}_r(\mathbf{a})=\{\mathbf{b}\in L^2: \mathbf{d}_\star(\mathbf{a},\mathbf{b})<r\}.$
%\end{center}
Then
\begin{center}
$T_\star=\big\{G: G\subseteq L~\text{and}~\forall a\in G,\exists r\gg 0\,\text{such that}\,\big(N_r(a)\subseteq G\big)\big\}$
\end{center}
and
\begin{center}
$\mathbf{T}_\star=\big\{G: G\subseteq L^2~\text{and}~\forall \mathbf{a}\in G,\exists r\gg 0\,\text{such that}\,\big(N_r(\mathbf{a})\subseteq G\big)\big\}$
\end{center}
form topologies on $L$ and $L^2$, respectively. Furthermore,
the mappings $\star:(L^2,\mathbf{T}_\star)\to(L,T_\star)$ and $\dotto:(L^2,\mathbf{T}_\star)\to(L,T_\star)$ are continuous functions.
\end{theorem}
\begin{proof}
Similar to the proof of Theorems \ref{tT} and \ref{main1} with dual notions.
\end{proof}
Now, for any continuous t-norm $*$, if $\star$ is defined by
\begin{center}
$a\star b=1-\big((1-a)*(1-b)\big)$,
\end{center}
then $*$ and $\star$ are called dual and one
can examine the open ball topology on the BL-algebra
$[0,1]_*=([0,1],\min,\max, *, \too, 0, 1)$ by the metric-like topology
on $[0,1]_\star=([0,1], \max, \min, \star, \dotto, 0, 1)$.
\begin{example}
We know that dual of $a*_Lb=\max\{1,a+b-1\}$ is $a\star_L b=\min\{1,a+b\}$. So, the open ball topology on the BL-algebra
\begin{center}
$[0,1]_L^t=([0,1], \min, \max, *_L, \too_L, 0, 1)$
\end{center}
can be examined with the metric topology on the SL-algebra
\begin{center}
$[0,1]_L^s=([0,1], \max, \min, \star_L, \dotto_L, 0, 1)$.
\end{center}
In this special case, the open ball topology on $[0,1]_L^t$ (Example \ref{luk}) and the metric
topology on $[0,1]_L^s$ (Example \ref{luka}) are equivalent. Indeed, replacing any element $b$ with $1-b$ in the $\star$-balls of $([0,1]_L^s,T_\star)$ gives an $*$-ball in $([0,1]_L^t,T_*)$. For example
$N_{0.1}(0.7)=(0.6,0.8)$ corresponds to $B_{0.1}(0.3)=(0.2,0.4)$,
$N_{0.3}(0.1)=[0,0.4)$ corresponds to $B_{0.3}(0.9)=(0.6,1]$, and so forth.
\end{example}
\begin{example}
Let $a*_Gb=\max\{a,b\}$ that is the \g t-norm. We know that
$\too_G=\left\{\begin{array}{ll}
1 & a\le b\\
b & a>b
\end{array}\right.$ \cite[Chapter2]{hajek98}. An argument such as the one in Example \ref{godel2}, shows that
\begin{center}
$(a\too_G b)*_G (b\too_G a)=
\left\{\begin{array}{ll}
1& a=b\\
\min\{a,b\}& a\ne b
\end{array}\right.$.
\end{center}
Therefore, there are two kinds of $*$-balls:
\begin{itemize}
  \item[$r\le a$)]: $B_r(a)=\{b: (a\too_G b)\star_G (b\too_G a)\ge r\}=[r,1]$,
  \item[$r> a$)]: $B_r(a)=\{b: (a\too_G b)\star_G (b\too_G a)\ge r\}=\{a\}$.
\end{itemize}
Note that the only singleton in $[0,1]_G^t$ which is not an $*$-ball is $\{1\}$. So, $T_*$ is a little coarser than the discreet topology on $[0,1]$.

On the other hand, if $a\star_G b=\min\{a,b\}$ which is the \g s-norm, Example \ref{godel2} shows that
\begin{center}
$(a\dotto_G b)\star_G (b\dotto_G a)=
\left\{\begin{array}{cc}
0& a=b\\
\max\{a,b\}& a\ne b
\end{array}\right.$.
\end{center}
So, the $\star$-balls of the $T_\star$ topology on
$[0,1]_G^s$ are as follows:
\begin{itemize}
  \item[$r< a$)]: $N_r(a)=\{b: (a\dotto_G b)\star_G (b\dotto_G a)\le r\}=\{a\}$,
  \item[$r\ge a$)]: $N_r(a)=\{b: (a\dotto_G b)\star_G (b\dotto_G a)\le r\}=[0,r]$.
\end{itemize}
In this case, the only singleton in $[0,1]_G^s$ which is not an $\star$-ball is $\{0\}$.

So, in this case, $T_*$ and $T_\star$ are not equivalent but we could examine
each of them by another. For example,
\begin{itemize}
  \item since for any $a>0$ the singleton $\{a\}$ is open in $T_\star$, by replacing $a$ with $1-a$ we get that for any $a<1$ the singleton $\{a\}$ is open in $T_*$,
  \item since for any $r>0$ the set $[0,r]$ is open in $T_\star$, by replacing any element $b$ with $1-b$ we know that for any $r<1$ the set $[r,1]$ is open in $T_*$,
  \item since $\{\frac{1}{n}\}_{n\in\mathbb{N}}\cup\{0\}$ is a compact subset of $([0,1],T_\star)$, so replacing any element $b$ with $1-b$ leads to the fact that  $\{1-\frac{1}{n}\}_{n\in\mathbb{N}}\cup\{1\}$ is a
  compact subset of $([0,1],T_*)$.
\end{itemize}
\end{example}
\section*{Final remarks}
In this paper we introduced a topology on BL-algebras that makes them semitopological
algebras. One of the advantages of this topology, is the study of
model theoretic properties of Basic logic. In \lo logic the continuity of the interpretation
of logical connectives make it possible to extend some of the results of model theory of classical logic to \lo logic.
However in Basic logic, this study did not developed as like as the \lo logic and the
introduced topology maybe smoothed the future way of this study. Finally, another possible research that maybe
facilitated by the introduced topology, is the study of stone topology for BL-algebras.

%{\bf Acknowledgments:} Author is indebted to Massoud Pourmahdian for his enlightening
%contributions through the preparation of this work.

%% References
%%
%% Following citation commands can be used in the body text:
%% Usage of \cite is as follows:
%%   \cite{key}          ==>>  [#]
%%   \cite[chap. 2]{key} ==>>  [#, chap. 2]
%%   \citet{key}         ==>>  Author [#]

%% References with bibTeX database:
\bibliographystyle{alpha}
\bibliography{amin}

\begin{thebibliography}{KMP00}

\bibitem[ATV83]{als}
Claudi Alsina, Enric Trillas, and Llorenc Valverde.
\newblock On some logical connectives for fuzzy sets theory.
\newblock {\em Journal of Mathematical Analysis and Applications},
  93(1):15--26, 1983.

\bibitem[BEG99]{godo99}
Dion{\'i}s Boixader, Francesc Esteva, and Llu{\'i}s Godo.
\newblock On the continuity of t-norms on bonded chains.
\newblock In {\em Proceedings of the Eighth International Fuzzy Systems
  Association World Congress (IFSA'99)}, pages 476--479, 1999.

\bibitem[BRK11]{bor-rez-kou-2011}
Rajab~Ali Borzooei, Gholam~Reza Rezaei, and Nader Kouhestani.
\newblock On (semi) topological bl-algebras.
\newblock {\em Iranian Journal of Mathematical Sciences and Informatics},
  6(1):59--77, 2011.

\bibitem[BRK12]{bor-rez-kou-2012}
Radjab~Ali Borzooei, Gholam~Reza Rezaei, and Nader Kouhestani.
\newblock Metrizability on (semi)topological bl-algebras.
\newblock {\em Soft Computing}, 16(10):1681--1690, 2012.

\bibitem[Cha59]{chang1959new}
Chen~Chung Chang.
\newblock A new proof of the completeness of the {{\L}}ukasiewicz axioms.
\newblock {\em Transactions of the American Mathematical Society},
  93(1):74--80, 1959.

\bibitem[H{\'a}j98]{hajek98}
Petr H{\'a}jek.
\newblock {\em Metamathematics of {F}uzzy {L}ogic}.
\newblock Kluwer Academic Trends in Logic, Springer Netherlands, 1998.

\bibitem[H{\"o}h78]{hol}
Ulrich H{\"o}hle.
\newblock Probabilistic uniformization of fuzzy topologies.
\newblock {\em Fuzzy Sets and Systems}, 1(4):311--332, 1978.

\bibitem[Kha18]{khatami2018}
Seyed Mohammad~Amin Khatami.
\newblock A metric on $[0,1]$ which makes it a topological sl-algebra.
\newblock In {\em Proceedings of the 6th Iranian Joint Congress on Fuzzy and
  Intelligent Systems (CFIS'2018)}, pages 111--113, 2018.

\bibitem[KMP00]{Klement}
Erich~Peter Klement, Radko Mesiar, and Endre Pap.
\newblock {\em Triangular {N}orms}.
\newblock Springer Netherlands, 2000.

\bibitem[Men42]{menger}
Karl Menger.
\newblock Statistical metrics.
\newblock {\em Proceedings of the National Academy of Sciences of the United
  States of America}, 28(12):535--537, 1942.

\bibitem[SS60]{sch}
Berthold Schweizer and Abe Sklar.
\newblock Statistical metric spaces.
\newblock {\em Pacific Journal of Mathematics}, 10(1):313--334, 1960.

\bibitem[ZB16]{zah-bor-2016}
Omid Zahiri and Rajab~Ali Borzooei.
\newblock Topology on bl-algebras.
\newblock {\em Fuzzy Sets and Systems}, 289(1):137--150, 2016.

\end{thebibliography}

\end{document}